 \newtheorem{theorem}{Theorem}[section]
 \newtheorem{proposition}[theorem]{Proposition}
\theoremstyle{definition}
\theoremstyle{remark}
\newtheorem{fact*}{Fact}
\DeclareMathOperator{\IM}{Im}
\newcommand{\C}{\mathbb{C}}
\newcommand{\PPT}[1]{\operatorname{PPT}#1}
\newcommand{\inv}{^{-1}}
\newcommand{\til}{\raise.17ex\hbox{$\scriptstyle\mathtt{\sim}$}}
\newcommand\beq{\begin{equation}}
\newcommand\eeq{\end{equation}}
\newcommand{\bbm}{\left[ \begin{smallmatrix}}
\newcommand{\ebm}{\end{smallmatrix} \right]}
\newcommand{\bpm}{\left( \begin{smallmatrix}}
\newcommand{\epm}{\end{smallmatrix} \right)}
\numberwithin{equation}{section}
\newlength{\Mheight}
\newlength{\cwidth}
\newcommand{\dfn}[1]{{\bf #1}\index{#1}}
\newcommand{\Mn}{M_n(\C)}
\title[Principal pivot monotone]{Monotonicity of the principal pivot transform}
\author[J. E. Pascoe]{
J. E. Pascoe$^\dagger$
}
\address{Department of Mathematics\\
1400 Stadium Rd\\
  University of Florida\\
 Gainesville, FL 32611}
\email[J. E. Pascoe]{pascoej@ufl.edu}
\thanks{$\dagger$ Partially supported by National Science Foundation Mathematical
Science Postdoctoral Research Fellowship  
DMS 1606260 and DMS Analysis Grant 1953963}
\author[R. Tully-Doyle]{
Ryan Tully-Doyle$^\ddagger$
}
\address{Mathematics Department \\
1 Grand Ave \\
Cal Poly, SLO\\
San Luis Obispo, CA 93407}
\email[R. Tully-Doyle]{rtullydo@calpoly.edu}
\thanks{$\ddagger$ Partially supported by National Science Foundation DMS Analysis Grant 2055098 and University of New Haven SRG and Cal Poly startup}
\date{\today}
\subjclass[2020]{15A09, 47A56}
\begin{document}

\begin{abstract}
We prove that the principal pivot transform (also known as the partial inverse, sweep operator, or exchange operator in various contexts) maps matrices with positive imaginary part to matrices with positive imaginary part. We show that the principal pivot transform is matrix monotone by establishing Hermitian square representations for the imaginary part and the derivative.
\end{abstract}

\maketitle

\section{Introduction} Suppose that $A \in \Mn$ is partitioned into the block matrix
\[
A = \bbm A_{11} & A_{12} \\ A_{21} & A_{22} \ebm
\]
and that the block $A_{11}$ is an invertible matrix. The \dfn{principal pivot transform} of $A$ is the matrix $B$ given by 
\[
\PPT(A) = \bbm -A_{11}\inv & A_{11}\inv A_{12} \\ A_{21} A_{11}\inv & A_{22} - A_{21} A_{11}\inv A_{12} \ebm.
\]
Applied to a linear equation $A x = y$, the principal pivot transform has the effect of switching the place of initial and target data in the first slot and then negating the first slot. That is, after partitioning $x, y$ relative to $A$,
\beq \label{pptformula}
A \bbm x_1 \\ x_2 \ebm = \bbm y_1 \\ y_2 \ebm \text{ if and only if } \PPT(A) \bbm y_1 \\ x_2 \ebm = \bbm -x_1 \\ y_2 \ebm.
\eeq
We note that some authors use a slightly different version of the principal pivot transform where the top block row is negated. The principal pivot transform has been thoroughly studied in various guises, for example, as the sweep operator \cite{lange, tao} and the partial inverse \cite{wer1, wer2}. The PPT is used in many applications (see \cite{pol, tsat} for a wide list of references including control theory, numerical analysis, and linear regression).

Equation \ref{pptformula} implies that
	\[\PPT(\PPT(\PPT(\PPT(A)))) = A.\]
Namely, the principal pivot transform algebraically has order $4$ and is injective on its domain.
Define the \dfn{imaginary part} of a matrix $A$ to be
	\[\IM A = \frac{A-A^*}{2i},\]
where $A^*$ denotes the adjoint or conjugate transpose of $A.$
Let $A, B$ be self-adjoint matrices. We say that $A\preceq B$ if $B-A$ is positive semidefinite. This ordering on matrices, the so-called L\"owner ordering, is related to majorization of eigenvalues and therefore other applications \cite{ando}.

We show that the principal pivot transform is an automorphism of matrices with positive imaginary part.
\begin{theorem}\label{mainresult}
	The following are true:
	\begin{enumerate}
		\item Suppose $A$ is a block $2$ by $2$ matrix. If $\IM A$ is positive semidefinite, then $\IM \PPT(A)$ is positive semidefinite.
		\item Suppose $A, B$ are block $2$ by $2$ matrices, such that for every $t\in[0,1]$ the matrix $(1-t)A_{11} + tB_{11}$ is invertible. If $A \preceq B,$ then $\PPT(A) \preceq \PPT(B).$
	\end{enumerate}
\end{theorem}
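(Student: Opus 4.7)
The plan is to establish two Hermitian square representations: one for $\IM\PPT(A)$ in terms of $\IM A$, which yields part (1), and one for the Fr\'echet derivative of $\PPT$ in a Hermitian direction, which yields part (2) after integration along the segment from $A$ to $B$. In both cases the congruence is implemented by the same matrix
\[
V(A) \;=\; \bbm A_{11}^{-1} & -A_{11}^{-1} A_{12} \\ 0 & I \ebm.
\]

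To derive the first identity I would bypass any brute-force expansion of $\PPT(A) - \PPT(A)^*$ and instead exploit the defining relation (\ref{pptformula}) at the level of quadratic forms. Setting $v = \vectwo{x_1}{x_2}$, $w = \vectwo{y_1}{x_2}$, $Av = \vectwo{y_1}{y_2}$, and $\PPT(A)w = \vectwo{-x_1}{y_2}$, a short inner product calculation gives
\[
\ip{Av}{v} - \ip{\PPT(A)w}{w} \;=\; \ip{y_1}{x_1} + \ip{x_1}{y_1} \;=\; 2\RE\ip{y_1}{x_1},
\]
which is real. Taking imaginary parts yields $\ip{(\IM A) v}{v} = \ip{(\IM\PPT(A)) w}{w}$, and since the first row of $Av = \vectwo{y_1}{y_2}$ is precisely $v = V(A) w$, one reads off $\IM\PPT(A) = V(A)^* (\IM A) V(A)$. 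Part (1) is immediate.

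For part (2), I would set $A(t) = (1-t)A + tB$ and $H = B - A \succeq 0$; the hypothesis guarantees $A_{11}(t)$ is invertible for every $t\in[0,1]$, so $\PPT(A(t))$ is smooth on $[0,1]$. A block-by-block differentiation, using $\tfrac{d}{dt}A_{11}(t)^{-1} = -A_{11}(t)^{-1} H_{11} A_{11}(t)^{-1}$ together with the Hermiticity of $A(t)$, collapses to
\[
\tfrac{d}{dt}\PPT(A(t)) \;=\; V(A(t))^* \, H \, V(A(t)) \;\succeq\; 0,
\]
and integrating from $0$ to $1$ gives $\PPT(B) - \PPT(A) \succeq 0$. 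The main obstacle is guessing the congruence matrix $V(A)$; once it is in hand, both identities reduce to a short inner-product computation and a verification of four matrix blocks. A conceptual hint for the guess is that $V(A)$ is exactly the linear map $w \mapsto v$ that the PPT identity (\ref{pptformula}) forces on the underlying data, so it is the natural object through which any quadratic form in $v$ must be transported into a quadratic form in $w$.
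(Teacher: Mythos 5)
Your proposal is correct, and it shares the paper's overall architecture: both rest on the same congruence matrix $V(A)=\bbm A_{11}^{-1} & -A_{11}^{-1}A_{12} \\ 0 & I \ebm$, with part (1) coming from the Hermitian square representation $\IM \PPT(A)=V(A)^*(\IM A)V(A)$ and part (2) from integrating $\tfrac{d}{dt}\PPT(A(t))=V(A(t))^*(B-A)V(A(t))\succeq 0$ along the segment joining $A$ to $B$. Where you genuinely differ is in how the two identities are obtained. The paper proves the imaginary-part identity by multiplying out $V(A)^*(2i\IM A)V(A)$ block by block; you derive it from the defining relation (\ref{pptformula}), noting that $\ip{Av}{v}-\ip{\PPT(A)w}{w}=2\RE\ip{y_1}{x_1}$ is real and that $v=V(A)w$, so the imaginary parts of the two quadratic forms coincide --- a more conceptual route that gives part (1) immediately (positivity is a statement about the quadratic form) and the full identity by polarization. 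For the derivative, the paper avoids differentiating blocks altogether: it evaluates the already-proved imaginary-part identity at $A+itH$ (so that $\IM A^{(t)}=tH$), divides by $t$, and lets $t\to 0$; your plan is a direct block-by-block differentiation using $\tfrac{d}{dt}A_{11}(t)^{-1}=-A_{11}(t)^{-1}H_{11}A_{11}(t)^{-1}$ and the self-adjointness of $A(t)$ (needed so the $(2,1)$ and $(2,2)$ blocks collapse, via $A_{21}(t)=A_{12}(t)^*$), which does check out but is more computational exactly where the paper is slick. The integration step and the role of the hypothesis that $(1-t)A_{11}+tB_{11}$ be invertible are the same in both; and, like the paper, your part (1) implicitly assumes $A_{11}$ invertible so that $\PPT(A)$ is defined, while your reliance on (\ref{pptformula}) is harmless since it follows from the definition by a one-line check.
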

Part (1) follows from Proposition \ref{improp}. Part (2) comes integrating a formula for the derivative of the principal pivot transform from Proposition \ref{deprop}.
In the case where each of the blocks are square matrices, condition (1) implies condition (2) in Theorem \ref{mainresult} by the noncommutative L\"owner theorem \cite{PTDPick, PTDRR, palfia, pascoeopsys}. 
We note that as the principal pivot transform is an automorphism of the block $2$ by $2$ matrix ``upper half plane", under conjugation by a suitable Cayley transform it is conjugate to an automorphism of
block $2$ by $2$ matrices which was studied in \cite{HKMS09}. 
Matrix monotonicity is important in various contexts including MIMO systems (see, e.g. \cite{MIMO, GATORHULLS}).

\section{Hermitian square representations}
The proof relies on the basic observation that 
if we can write $Y = Z^*XZ$ and $X$ is positive semidefinite, then $Y$ is also positive semidefinite.

\begin{proposition}\label{improp}
	Let $A$ be a square block $2$ by $2$ matrix such that $A_{11}$ is invertible.
	\[\IM \PPT(A) =  \bbm A_{11}^{-1} & -A_{11}^{-1}A_{12} \\  0 & 1 \ebm^* \IM A \bbm A_{11}^{-1} & -A_{11}^{-1}A_{12} \\  0 & 1 \ebm.\]
\end{proposition}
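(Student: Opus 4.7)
The plan is to rewrite $\PPT(A)$ as $Z^* A Z + H$ for some manifestly Hermitian $H$, where $Z$ is the conjugator appearing on the right-hand side of the proposition. Once this is achieved, the conclusion is immediate: for any matrix $M$ and any $Z$, one has $\IM(Z^* M Z) = Z^* (\IM M) Z$ by pulling $Z^*$ and $Z$ outside the Hermitian difference $(M - M^*)/(2i)$, and $\IM H = 0$ because $H$ is self-adjoint. Applying $\IM$ to the decomposition then produces the claimed formula.

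First I would expand $A Z$. The whole point of the peculiar choice $Z = \bbm A_{11}^{-1} & -A_{11}^{-1} A_{12} \\ 0 & 1 \ebm$ is that multiplication on the right by $Z$ simultaneously clears the top row of $A$ and builds in the Schur complement: the $(1,1)$ block of $AZ$ collapses to $I$ and the $(1,2)$ block collapses to $-A_{12} + A_{12} = 0$, while the bottom row becomes $A_{21} A_{11}^{-1}$ and $A_{22} - A_{21} A_{11}^{-1} A_{12}$. Multiplying by $Z^*$ on the left then gives $Z^* A Z$, whose $(2,2)$ entry is exactly the Schur complement appearing in $\PPT(A)$ and whose remaining blocks differ from the corresponding blocks of $\PPT(A)$ only by a Hermitian correction.

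Next I would verify Hermiticity of the residual $H := \PPT(A) - Z^* A Z$. Block by block this comes out to $(1,1) = -A_{11}^{-1} - A_{11}^{-*}$, $(1,2) = A_{11}^{-1} A_{12}$, $(2,1) = A_{12}^* A_{11}^{-*} = (A_{11}^{-1} A_{12})^*$, and $(2,2) = 0$, from which $H = H^*$ is plain. The only real obstacle is careful accounting in the two block multiplications, but the cancellations baked into $Z$ keep the arithmetic short and transparent.
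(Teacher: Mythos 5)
Your proof is correct, and it takes a slightly different route from the paper's. The paper verifies the identity directly at the level of imaginary parts: it writes out $2i\IM A$ in blocks, multiplies on the right by $Z=\bbm A_{11}\inv & -A_{11}\inv A_{12}\\ 0&1\ebm$ and on the left by $Z^*$, and matches the result block-by-block with $2i\IM\PPT(A)$. You instead establish the structural decomposition $\PPT(A)=Z^*AZ+H$ with $H=\bbm -A_{11}\inv-(A_{11}^*)\inv & A_{11}\inv A_{12}\\ A_{12}^*(A_{11}^*)\inv & 0\ebm$ self-adjoint, and then apply $\IM$ using $\IM(Z^*MZ)=Z^*(\IM M)Z$ and $\IM H=0$; your block arithmetic checks out ($AZ$ has top row $\bbm 1 & 0\ebm$ and bottom row $\bbm A_{21}A_{11}\inv & A_{22}-A_{21}A_{11}\inv A_{12}\ebm$, and the residual blocks are exactly as you list). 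The two arguments are of comparable length, but yours buys a little extra: the product $Z^*AZ$ is cleaner to compute because of the cancellations in $AZ$, and exhibiting $\PPT(A)$ as a congruence of $A$ plus a fixed Hermitian correction is a mildly stronger statement that isolates exactly where the non-Hermitian part of $A$ ends up; the paper's computation, in exchange, needs no auxiliary observation about $\IM$ and lands directly on the stated formula (and is the form reused verbatim in the derivative argument of Proposition \ref{deprop}).
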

\begin{proof}
	Note
	\[ 2i\IM A = \bbm A_{11}-A_{11}^* & A_{12}-A_{21}^* \\ A_{21}-A_{12}^* & A_{22}-A_{22}^* \ebm\]
	Now,
	\[ 2i\IM A  \bbm A_{11}^{-1} & -A_{11}^{-1}A_{12} \\  0 & 1 \ebm =
	\bbm 1-A_{11}^*A_{11}^{-1}  & A_{11}^* A_{11}^{-1}A_{12}-A_{21}^*  \\ A_{21}A_{11}^{-1}-A_{12}^*A_{11}^{-1} & -A_{21} A_{11}^{-1}A_{12}+A_{12}^* A_{11}^{-1}A_{12}  + A_{22}-A_{22}^* \ebm.\]
Now considering,
\[ \bbm A_{11}^{-1} & -A_{11}^{-1}A_{12} \\  0 & 1 \ebm^*  = \bbm (A_{11}^*)^{-1} & 0 \\   -A_{12}^*(A_{11}^*)^{-1} & 1 \ebm,\]
and calculating,
\[\bbm (A_{11}^*)^{-1} & 0 \\   -A_{12}^*(A_{11}^*)^{-1} & 1 \ebm\bbm 1-A_{11}^*A_{11}^{-1}  & A_{11}^* A_{11}^{-1}A_{12}-A_{21}^*  \\ A_{21}A_{11}^{-1}-A_{12}^*A_{11}^{-1} & -A_{21} A_{11}^{-1}A_{12}+A_{12}^* A_{11}^{-1}A_{12}  + A_{22}-A_{22}^* \ebm,
\]
we get
\[
	\bbm
		 -A_{11}^{-1}+  (A_{11}^*)^{-1} & A_{11}^{-1}A_{12} - (A_{11}^*)^{-1}A_{21}^*\\
		 A_{21}A_{11}^{-1} -A_{12}^* (A_{11}^*)^{-1} & 
		A_{22} - A_{21} A_{11}\inv A_{12} - A_{22}^{*} + A_{12}^* (A_{11}^*)\inv A_{21}^*
	\ebm,
\]
which is exactly $2i \IM \PPT(A).$ 
\end{proof}

\begin{proposition}\label{deprop}
	Let $A, H$ be like-sized self-adjoint square block $2$ by $2$ matrices such that $A_{11}$ is invertible.
	\[D\PPT(A)[H] =  \bbm A_{11}^{-1} & -A_{11}^{-1}A_{12} \\ 0 & 1 \ebm^* H \bbm A_{11}^{-1} & -A_{11}^{-1}A_{12} \\ 0 & 1 \ebm\]
	where $D\PPT(A)[H] = \frac{d}{dt} PPT(A+tH) |_{t=0}.$
\end{proposition}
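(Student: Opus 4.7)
The plan is to verify the formula $D\PPT(A)[H] = M^* H M$, where $M = \bbm A_{11}^{-1} & -A_{11}^{-1}A_{12} \\ 0 & 1 \ebm$, by computing both sides block by block and matching. The argument should parallel the proof of Proposition \ref{improp}, and in fact the striking structural resemblance between the two formulas suggests a slicker route as well, which I will mention at the end.

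First I would compute the left-hand side. Using the standard identity $\frac{d}{dt}(A_{11}+tH_{11})^{-1}\big|_{t=0} = -A_{11}^{-1}H_{11}A_{11}^{-1}$ together with the Leibniz rule, I differentiate each of the four blocks of $\PPT(A+tH)$ at $t=0$. The $(1,1)$ block gives $A_{11}^{-1}H_{11}A_{11}^{-1}$; the $(1,2)$ and $(2,1)$ blocks each yield two terms coming from a product of two factors; and the $(2,2)$ block, arising from $(A_{22}+tH_{22}) - (A_{21}+tH_{21})(A_{11}+tH_{11})^{-1}(A_{12}+tH_{12})$, yields four terms.

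Next I would expand the right-hand side $M^* H M$. Using the self-adjointness of $A$ (so that $(A_{11}^*)^{-1} = A_{11}^{-1}$ and $A_{12}^* = A_{21}$), $M^*$ simplifies to $\bbm A_{11}^{-1} & 0 \\ -A_{21}A_{11}^{-1} & 1 \ebm$. I would compute $HM$ first and then multiply by $M^*$, read off the four blocks, and check that they agree with those of $D\PPT(A)[H]$ obtained in the previous step.

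The main obstacle is purely bookkeeping: sign conventions and the order of multiplication must be tracked carefully, but no new ideas beyond those in Proposition \ref{improp} are required. As a slicker alternative that avoids redoing most of the calculation, one can observe that $\PPT$ is a rational map, hence complex-analytic in the entries of its argument, and sends self-adjoint matrices to self-adjoint matrices on its domain. Applying Proposition \ref{improp} to $A+itH$ for small real $t$ gives $\IM \PPT(A+itH) = t\, M(A+itH)^* H M(A+itH)$; differentiating at $t=0$ then yields $\RE D\PPT(A)[H] = M^* H M$, and the left side equals $D\PPT(A)[H]$ because self-adjointness is preserved under $D\PPT(A)$ acting on self-adjoint $H$.
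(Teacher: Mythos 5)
Your main route---differentiating each block of $\PPT(A+tH)$ at $t=0$ and matching against the block expansion of $M^*HM$---is correct, and it is genuinely different from the paper's proof. Carrying out your plan: with $A$ self-adjoint, $M^* = \bbm A_{11}^{-1} & 0 \\ -A_{21}A_{11}^{-1} & 1 \ebm$, and the four blocks of $M^*HM$ come out as $A_{11}^{-1}H_{11}A_{11}^{-1}$, \ $A_{11}^{-1}H_{12}-A_{11}^{-1}H_{11}A_{11}^{-1}A_{12}$, \ $H_{21}A_{11}^{-1}-A_{21}A_{11}^{-1}H_{11}A_{11}^{-1}$, and $H_{22}-H_{21}A_{11}^{-1}A_{12}-A_{21}A_{11}^{-1}H_{12}+A_{21}A_{11}^{-1}H_{11}A_{11}^{-1}A_{12}$, which are exactly the block derivatives produced by the Leibniz rule and $\frac{d}{dt}(A_{11}+tH_{11})^{-1}\big|_{t=0}=-A_{11}^{-1}H_{11}A_{11}^{-1}$. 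This elementary computation needs no analyticity and in fact uses self-adjointness only of $A$ (to identify the left factor as $M^*$), not of $H$; its cost is the bookkeeping you mention, and it does not by itself explain the Hermitian-square shape. The paper's actual proof is, up to presentation, your ``slicker alternative'': it evaluates Proposition \ref{improp} at $A^{(t)}=A+itH$, uses $\IM A^{(t)}=tH$, divides by $t$, and lets $t\to 0$; rather than phrasing the limit as $\RE D\PPT(A)[H]$, it rewrites $\IM \PPT(A^{(t)})/t$ as the symmetric difference quotient $\frac{\PPT(A+itH)-\PPT(A-itH)}{2it}$ (via $\PPT(X^*)=\PPT(X)^*$), which converges to $D\PPT(A)[H]$. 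Either version of that second route relies on the two points you correctly flag: invertibility of $A_{11}+itH_{11}$ for small real $t$ (an open condition) and complex-linearity of the differential of the rational map $\PPT$, so that the derivative along the direction $iH$ is $i\,D\PPT(A)[H]$; granting these, your identity $\RE D\PPT(A)[H]=M^*HM$ together with preservation of self-adjointness of $\PPT(A+tH)$ for real $t$ gives a complete argument equivalent to the paper's, while your first route buys a self-contained verification valid for arbitrary, not necessarily self-adjoint, $H$.
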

\begin{proof}
	Evaluate the formula from Proposition \ref{improp} at $A^{(t)}=A+itH$ to obtain
		\[\IM \PPT(A^{(t)}) = 
	\bbm (A^{(t)}_{11})^{-1} & -(A^{(t)}_{11})^{-1}A^{(t)}_{12} \\  0 & 1 \ebm^* \IM A^{(t)} \bbm (A^{(t)}_{11})^{-1} & -(A^{(t)}_{11})^{-1}A^{(t)}_{12} \\  0 & 1 \ebm.\]
	Dividing by $t$, we get
	\[\frac{ \IM \PPT(A^{(t)})}{t} = 
	\bbm (A^{(t)}_{11})^{-1} & -(A^{(t)}_{11})^{-1}A^{(t)}_{12} \\  0 & 1 \ebm^*  \frac{ \IM A^{(t)}}{t} \bbm (A^{(t)}_{11})^{-1} & -(A^{(t)}_{11})^{-1}A^{(t)}_{12} \\  0 & 1 \ebm.\]
	Note that
		\[ \frac{ \IM A^{(t)}}{t} = H, 
		\]
		and 
		\[\frac{ \IM \PPT(A^{(t)})}{t} = \frac{\PPT(A+itH)-\PPT(A-itH)}{2it}.\]
	By taking the limit as $t$ goes to $0$, we get 
		\[D\PPT(A)[H] =  \bbm A_{11}^{-1} & -A_{11}^{-1}A_{12} \\ 0 & 1 \ebm^* H \bbm A_{11}^{-1} & -A_{11}^{-1}A_{12} \\ 0 & 1 \ebm.\]
\end{proof}

\bibliography{references}

\begin{thebibliography}{10}

\bibitem{ando}
T.~Ando.
\newblock Majorization, doubly stochastic matrices, and comparison of
  eignenvalues.
\newblock {\em Lin. Alg. Appl.}, 118, 1989.

\bibitem{GATORHULLS}
J.~W. Helton, J.~Nie, and J.~Semko.
\newblock Free semidefinite representation of matrix power functions.
\newblock {\em Lin. Alg. Appl.}, 265:347--362, 2015.

\bibitem{HKMS09}
J.W. Helton, I.~Klep, S.~McCullough, and N.~Slinglend.
\newblock Noncommutative ball maps.
\newblock {\em J. Funct. Anal.}, 257:47--87, 2009.

\bibitem{lange}
K.~Lange.
\newblock {\em Numerical analysis for statisticians}.
\newblock Springer, 2010.

\bibitem{palfia}
Miklos Palfia.
\newblock {L\"owner's theorem in several variables}.
\newblock {\em J. Math. Anal. Appl.}, 490(1), 2020.

\bibitem{pascoeopsys}
J.~E. Pascoe.
\newblock {The noncommutative L\"owner theorem for matrix monotone functions
  over operator systems}.
\newblock {\em Lin. Alg. Appl.}, 541:54 -- 59, 2018.

\bibitem{PTDRR}
J.~E. Pascoe and R.~Tully-Doyle.
\newblock {The royal road to automatic noncommutative real analyticity,
  monotonicity, and convexity}.
\newblock preprint.

\bibitem{PTDPick}
J.~E. Pascoe and R.~Tully-Doyle.
\newblock Free {P}ick functions: representations, asymptotic behavior and
  matrix monotonicity.
\newblock {\em J. Func. Anal.}, 273(1):283--328, 2017.

\bibitem{pol}
F.~Poloni and N.~Strabi.
\newblock Principal pivot transforms of quasidefinite matrices and semidefinite
  {L}agrangian subspaces.
\newblock {\em Electron. J. Linear Algebra}, 31, 2016.

\bibitem{tao}
T.~Tao.
\newblock Sweeping a matrix rotates its graph.
\newblock
  \url{https://terrytao.wordpress.com/2015/10/07/sweeping-a-matrix-rotates-its-graph/},
  2015.

\bibitem{tsat}
M.~Tsatsomeros.
\newblock Principal pivot transforms: properties and applications.
\newblock {\em Lin. Alg. Appl.}, 30:151--165, 2000.

\bibitem{wer1}
N.~Wermuth, M.~Wiedenbeck, and D.~R. Cox.
\newblock Partial inversion for linear systems and partial closures of
  independence graphs.
\newblock {\em BIT Numerical Stat.}, 46, 2006.

\bibitem{wer2}
M.~Wiedenbeck and N.~Wermuth.
\newblock Changing parameters by partial mappings.
\newblock {\em Statistica Sinica}, 20, 2010.

\bibitem{MIMO}
C.~Xing, S.~Ma, and Y.~Zhou.
\newblock Matrix optimization problems for mimo systems with matrix monotone
  objective functions.
\newblock In {\em 2014 IEEE International Conference on Communications
  Systems}, 2014.

\end{thebibliography}
\bibliographystyle{plain}


\end{document}